\newtheorem{thm}{Theorem}[section]
\newtheorem{lem}[thm]{Lemma}
\newtheorem{prop}[thm]{Proposition}
\theoremstyle{definition}
\newtheorem{que}[thm]{Question}
\theoremstyle{remark}
\numberwithin{equation}{section}
\newcommand{\R}{\mathbb R}
\newcommand{\Z}{\mathbb Z}
\newcommand{\N}{\mathbb N}
\DeclareMathSymbol{\emptyset}{\mathord}{AMSb}{"3F}
\DeclareMathSymbol{\preccurlyeq}{\mathrel}{AMSa}{"34}
\newcommand{\lpm}{{\mathcal P}{\mathcal M}_{{\mathrm L}{\mathrm i} {\mathrm p}}}
\newcommand{\lm}{{\mathcal M}_{{\mathrm L}{\mathrm i} {\mathrm p}}}
\newcommand{\lpu}{{\mathcal U}{\mathcal M}_{{\mathrm L}{\mathrm i} {\mathrm p}}}
\newcommand{\lpf}{{\mathcal C}_{{\mathrm L}{\mathrm i} {\mathrm p}}}
\begin{document}  

\title[Extending Lipschitz (pseudo)metric]{\bf Nonexistence of linear operators extending Lipschitz (pseudo)metric}

\author[D. Repov\v{s}]{DU\v{S}AN REPOV\v{S} (Ljubljana)}
\address{Faculty of Mathematics and Physics, and Faculty of Education,
University of Ljubljana,
P.O.B. 2964, 1001 Ljubljana,
Slovenia}
\email{dusan.repovs@guest.arnes.si}

\author[M. Zarichnyi]{MICHAEL ZARICHNYI (Lviv and Rzesz\'ow)}
\address{Department of Mechanics and Mathematics,
Lviv National University,
Universytetska Str. 1,
79000 Lviv, 
Ukraine\\
and\\
Institute of Mathematics,
University of Rzesz\'ow,
Rejtana 16 A,
35-310 Rze\-sz\'ow,
Poland}
\email{mzar@litech.lviv.ua}

\thanks{This research was supported by the
Slovenian Research Agency grants P1-0292-0101 and J1-2057-0101.
The authors are indebted to Ihor Stasyuk for pointing an error in the preliminary version. We also thank all  referees for their remarks.}

\subjclass{26A16, 54C20, 54E35, 54E40}

\keywords{Lipschitz pseudometric, extension operator, Lipschitz function, zero-dimensional compactum, bi-Lipschitz equivalence}

%%% ----------------------------------------------------------------------

\begin{abstract} We present
an example of a zero-dimensional compact metric
space $X$ and its closed subspace $A$ such that there is no continuous linear
extension operator for the Lipschitz pseudometrics on $A$ to the Lipschitz
pseudometrics on $X$. The construction is based on results of A. Brudnyi and Yu.
Brudnyi concerning linear extension operators for Lipschitz functions.
\end{abstract}

%%% ----------------------------------------------------------------------
\maketitle
%%% ----------------------------------------------------------------------

\section{Introduction}

The problem of extensions of metric has a long history. Hausdorff was the 
first to prove that any continuous metric defined on a closed subset of a
metrizable space can be extended to a continuous metric defined on the whole
space. Bessaga  \cite{be2,be1} considered the problem of existence of linear extension
operators for metrics and provided a partial
solution of this problem. The problem was completely solved by Banakh \cite{b}
(see also a short proof in \cite{z}).

The problem of extending Lipschitz and uniform (pseudo)metrics was
considered in \cite{L}. It is well-known that any Lipschitz pseudometric defined on a
closed subset of a metric space admits an extension which is a Lipschitz
pseudometric defined on the whole space. Recently, 
an extension operator for Lipschitz and uniform pseudometrics
was constructed in \cite{bbst}
which preserves the Lipschitz norm and has many nice properties.

In the present paper we shall consider the
problem of
existence of linear extension operators for Lipschitz pseudometric. To the
best of our
knowledge, no affirmative results are known
in this direction. Our
goal
is to construct a counterexample, i.e. show that
there exists a subset of a
zero-dimensional compact metric space for which there is no such 
extension
operator (recall that a space is zero-dimensional if its topology  possesses
a base consisting of sets which are open and closed). The example is based
on results from \cite{BB} concerning the
linear extension operators for Lipschitz functions.

Note that conditions for existence of extensions of continuous functions are
often equivalent to those for existence of extensions of continuous pseudometrics
(cf. e.g. \cite{ASh,A,Sh,Se}). It turns out that in
the case of linear extension operators, one is able to procede at least in one
direction, namely from pseudometrics to functions.

\section{Preliminaries}

Let $(X,\varrho)$ be a compact metric space. 
Given a subset $A$ of $X$,  a
pseudometric $d$ on $A$ is said to be
{\it Lipschitz} if there exists $C>0$
such that
$$d(x,y)\le C\varrho (x,y) \ \ \hbox{\rm for any} \ \ x,y\in A.$$ 

Next, a map
$f\colon X\to Y$, where $(Y,d)$ is a metric space, is called
$C$-{\it Lipschitz}, $C>0$, if
$$d(f(x),f(y))\le C\varrho(x,y) \ \ \hbox{\rm for every} \ \  x,y\in X.$$

If the knowledge of $C$ is not important, we  shall use the term ``Lipschitz map'' instead of ``$C$-Lipschitz map''.

The set of reals $\R$ is endowed with the standard metric $|x-y|$.
A bijective map $f$ of metric spaces is said to be 
{\em bi-Lipschitz} if both
$f$ and $f^{-1}$ are Lipschitz. Metric spaces are called 
{\em bi-Lipschitz equivalent}
if there exists
some bi-Lipschitz map between them.

Until the end of this section, $A$ will denote a nonempty closed subset in a compact metric space $(X,\varrho)$.
Denote by $\lpm(A)$ (resp.
$\lm(A)$, $\lpf(A)$) the set of all Lipschitz pseudometrics (resp.
metrics, functions) on $A$. The set $\lpm(A)$ (resp. $\lpf(A)$) is a cone
(resp. linear space) with respect to the operations of pointwise addition
and multiplication by scalar.

We endow $\lpm(A)$ with the norm $\|\cdot\|_{\lpm(A)}$ which is defined as follows:

$$\|d\|_{\lpm(A)}=\sup\left\{\frac{d(x,y)}{\varrho(x,y)}\mid x\neq y\right\}$$

and we put on $\lpf(A)$  the seminorm $\|\cdot\|_{\lpf(A)}$ which is defined as follows:

$$\|f\|_{\lpf(A)}=\sup\left\{\frac{|f(x)-f(y)|}{\varrho(x,y)}\mid x\neq y\right\}$$
(in the sequel, we shall
abbreviate $\|\cdot\|_{\lpm(A)}$ and $\|\cdot\|_{\lpf(A)}$ by
$\|\cdot\|_A$).

We say that a map $u\colon \lpm(A)\to\lpm(X)$ is an {\em extension operator} for
Lipschitz pseudometrics if the following holds:
\begin{enumerate}
  \item $u$ is linear, i.e.
  $$u(d_1+d_2)=u(d_1)+u(d_2),
  u(\lambda d)=\lambda
  u(d)
  \ \ \hbox{\rm for every} \ \
  d_1,d_2\in \lpm(A), \lambda\in\R_+);$$

  \item $u(d)|(A\times A)=d$, for every $d\in\lpm(A)$; and

  \item $u$ is continuous in the sense that 
  $$\|u\|=\sup\{\|u(d)\|_X\mid \|d\|_A\le1\}
  \ \ \hbox{\rm   is finite}.$$
\end{enumerate}

This definition is a natural analogue
of that
introduced in \cite{BB} for the
extensions of Lipschitz functions. The following notation was introduced in
\cite{BB}: $$\lambda(A,X)=\inf\{\|u\|\mid u\text{ is a linear extension operator
from }\lpf(A)\text{ to }\lpf(X)\}.$$

Similarly, we define
\begin{align*}
 \Lambda(A,X)= & \inf\{\|u\|\mid u\text{ is a
linear extension operator} \\
   & \lpm(A)\to \lpm(X)\}.
\end{align*}

It can be easily proved  that $$\Lambda(X)=\sup\{\Lambda(A,X)\mid
A\subset X\}$$ is a bi-Lipschitz {\it invariant} of a metric space $X$, i.e.
$$\Lambda (Y)=\Lambda(Y') \ \ \hbox{\rm whenever} Y \ \ \hbox{\rm and} \ \ Y' \ \ \hbox{\rm are bi-Lipschitz equivalent}.$$

\section{Auxiliary results}
Given a metric space $X=(X,d)$ and $c>0$, we denote the metric space $(X,cd)$ by $cX$.

\begin{lem}
For a metric space $(X,d)$,  $S\subset X$, and $c>0$, we have
$\lambda(cS,cX)=\lambda(S,X)$.
\end{lem}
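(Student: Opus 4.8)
The plan is to observe that rescaling the metric by $c$ does not change the underlying \emph{set} of Lipschitz functions, only its seminorm, and that this rescaling affects the source $S$ and the target $X$ in exactly the same way; hence an extension operator is literally the same object for both pairs and has the same operator norm.

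First I would record the elementary identity: for any $f\in\lpf(S)$,
$$\|f\|_{cS}=\sup\left\{\frac{|f(x)-f(y)|}{(cd)(x,y)}\mid x\ne y\right\}=\frac1c\,\|f\|_S,$$
and likewise $\|g\|_{cX}=\tfrac1c\|g\|_X$ for every $g\in\lpf(X)$. In particular $\lpf(cS)=\lpf(S)$ and $\lpf(cX)=\lpf(X)$ coincide as linear spaces, since $|f(x)-f(y)|\le Cd(x,y)$ for all $x,y$ if and only if $|f(x)-f(y)|\le (C/c)(cd)(x,y)$ for all $x,y$.

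Next, let $u\colon\lpf(S)\to\lpf(X)$ be any linear extension operator. Since the linearity of $u$ and the extension condition $u(f)|(S)=f$ refer only to the linear and set-theoretic structure and not to the particular metric, the very same map $u$ is a linear extension operator from $\lpf(cS)$ to $\lpf(cX)$. It remains to compute its norm with respect to the rescaled seminorms. Writing an arbitrary $f$ with $\|f\|_{cS}\le1$ as $f=ch$ with $\|h\|_S\le1$ and using positive homogeneity of $u$, we obtain
\begin{align*}
\|u\|_{cS\to cX}
&=\sup\{\|u(f)\|_{cX}\mid f\in\lpf(cS),\ \|f\|_{cS}\le1\}\\
&=\sup\{\tfrac1c\|u(ch)\|_X\mid h\in\lpf(S),\ \|h\|_S\le1\}\\
&=\sup\{\|u(h)\|_X\mid h\in\lpf(S),\ \|h\|_S\le1\}=\|u\|_{S\to X}.
\end{align*}

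Finally, taking the infimum over all such $u$ gives $\lambda(cS,cX)\le\lambda(S,X)$, and the reverse inequality follows by running the same argument with $c$ replaced by $1/c$ and $(S,X)$ by $(cS,cX)$; this yields equality (with the usual convention $\inf\emptyset=+\infty$ covering the degenerate case in which no extension operator exists). I do not expect any genuine obstacle here: the only point requiring minor care is tracking the direction of the scaling factor $1/c$ and using homogeneity to absorb it, so that the operator norm is truly invariant rather than merely rescaled.
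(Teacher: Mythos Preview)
Your proof is correct and follows essentially the same idea as the paper's: both exploit that the Lipschitz seminorm rescales by the same factor $1/c$ on $S$ and on $X$, so an extension operator for $(S,X)$ serves, with the same operator norm, for $(cS,cX)$. The paper phrases this by tracking a single function $\varphi$ through the rescaling, whereas you observe directly that the operator $u$ is literally the same map with the same norm; this is only a difference of presentation, not of substance.
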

\begin{proof}
Let $\varphi\in\lpf(cS)$ and $\|\varphi\|_{cS}=K$. Then $\varphi$ can also be
considered as an element of $\lpf(S)$ with $\|\varphi\|_{S}=K/c$. There exists an
extension $\bar\varphi\colon X\to\R$ of $\varphi$ with $\|\bar\varphi\|_{X}\le
(K\lambda(S,X))/c$. Considering $\bar\varphi$ as an element of $\lpf(cX)$, we see
that $\|\bar\varphi\|_{cX}\le (K\lambda(S,X))$. Therefore
$\lambda(cS,cX)\le\lambda(S,X)$. Arguing similarly, one can prove
also the opposite
inequality.
\end{proof}
\begin{lem}\label{l:2}
Let a metric pair $(S_1,X_1)$ be a retract of a metric pair $(S,X)$ under a
1-Lipschitz retraction. Then $\lambda(S_1,X_1)\le
\lambda(S,X)$.
\end{lem}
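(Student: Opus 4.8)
The plan is to pull back extension operators along the retraction. Recall that the hypothesis means that $X_1$ is a (metric) subspace of $X$, that $S_1\subset S\cap X_1$, and that there is a $1$-Lipschitz map $r\colon X\to X_1$ with $r|_{X_1}=\mathrm{id}_{X_1}$ and $r(S)\subset S_1$. So I would fix a linear extension operator $u\colon\lpf(S)\to\lpf(X)$ and define $u_1\colon\lpf(S_1)\to\lpf(X_1)$ by
$$u_1(f)=u\bigl(f\circ r|_S\bigr)\big|_{X_1}.$$
This makes sense: for $f\in\lpf(S_1)$ the composition $f\circ r|_S$ lies in $\lpf(S)$ because $r$ is $1$-Lipschitz, and in fact $\|f\circ r|_S\|_S\le\|f\|_{S_1}$; applying $u$ and then restricting to $X_1$ keeps us inside $\lpf(X_1)$.

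Next I would verify the three defining properties of an extension operator. Linearity of $u_1$ is immediate, since precomposition with $r|_S$ and restriction to $X_1$ are linear and $u$ is linear. For the extension property, take $x\in S_1$; then $x\in S$, so $u(f\circ r|_S)(x)=(f\circ r|_S)(x)=f(r(x))$, and since $x\in X_1$ and $r|_{X_1}=\mathrm{id}_{X_1}$ we have $r(x)=x$, hence $u_1(f)(x)=f(x)$. For continuity, restriction to a subset does not increase the Lipschitz seminorm, so
$$\|u_1(f)\|_{X_1}\le\|u(f\circ r|_S)\|_X\le\|u\|\,\|f\circ r|_S\|_S\le\|u\|\,\|f\|_{S_1},$$
whence $\|u_1\|\le\|u\|$.

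Finally, taking the infimum over all linear extension operators $u$ from $\lpf(S)$ to $\lpf(X)$ yields $\lambda(S_1,X_1)\le\lambda(S,X)$.

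The argument is essentially formal, so there is no genuine obstacle; the only points requiring care are to record the two elementary monotonicity facts — that precomposition with a $1$-Lipschitz map and that restriction to a subset each can only decrease the Lipschitz (semi)norm — and to make sure both the retraction identity $r|_{X_1}=\mathrm{id}_{X_1}$ and the condition $r(S)\subset S_1$ are invoked at the right places (the former for the extension property, the latter to guarantee $f\circ r|_S$ is defined on $S$).
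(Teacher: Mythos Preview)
Your proof is correct and follows essentially the same route as the paper: compose with $r|_S$ to transport $f\in\lpf(S_1)$ to $\lpf(S)$, apply a linear extension operator, and restrict to $X_1$. If anything, your write-up is more explicit than the paper's, which phrases the middle step as ``there is an extension $g$ with $\|g\|_X\le\lambda(S,X)\|f\circ(r|S)\|_S$'' without naming the operator and leaves linearity implicit; you spell out the operator $u_1$ and verify each property directly.
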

\begin{proof}
 Let $r\colon X\to X_1$ be a 1-Lipschitz retraction such that
 $r(S)=S_1$. Given a Lipschitz function $f\colon S_1\to\R$, we see
 that $f\circ(r|S)$ is a  Lipschitz function on $S$ with
 $\|f\circ(r|S)\|_S=\|f\|_{S_1}$. There is an extension $g\colon X\to\R$ of $f\circ(r|S)$
 with $\|g\|_X\le \lambda(S,X)\|f\circ(r|S)\|_S$. Then $g|X_1$  is an extension
 of $f$ over $X_1$ with $\|g|X_1\|_{X_1}\le \lambda(S,X)\|f\|_{S_1}$.
\end{proof}

\begin{prop}\label{p:1} Let $S$ be a closed subset of a compact metric space $X$ with
$|S|\ge2$. Then the following are equivalent:
\begin{enumerate}
  \item  there  exists a continuous linear extension operator
 $$\lpm(S) \to \lpm(X);$$  and

  \item  there  exists a continuous linear  extension operator
$$\lm(S) \to \lm(X).$$
\end{enumerate}

\end{prop}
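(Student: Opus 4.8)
The plan is to prove the two implications separately, with the implication $(1)\Rightarrow(2)$ being the substantive one. For $(2)\Rightarrow(1)$, observe that every Lipschitz pseudometric $d$ on $S$ can be written, using a fixed background metric, as a difference of Lipschitz metrics, or more directly: if $\varrho|S$ denotes the ambient metric restricted to $S$, then $d + \varrho|S$ is a Lipschitz metric on $S$ whenever $d$ is a Lipschitz pseudometric, and $\varrho|S$ itself is a Lipschitz metric. So given a continuous linear extension operator $v\colon\lm(S)\to\lm(X)$, one is tempted to set $u(d) = v(d+\varrho|S) - v(\varrho|S)$; the issue is that $d\mapsto d+\varrho|S$ is affine, not linear, so $u$ need not be linear. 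I would instead argue that a continuous linear extension operator on metrics extends \emph{uniquely} by continuity/density considerations, or — more robustly — use that the cone $\lm(S)$ linearly spans $\lpm(S)$ together with a uniform boundedness argument: define $u$ on the cone $\lm(S)$ by $u=v$, note that $\lpm(S)$ is generated as a cone by $\lm(S)$ and the single element $\varrho|S$ (since $d = (d+\varrho|S) - \varrho|S$), and check that the formula $u(d):=v(d+c\,\varrho|S)-c\,v(\varrho|S)$ is independent of $c>0$ and additive and positively homogeneous, hence a well-defined linear operator, with norm controlled by $\|v\|$ and $\|\varrho|S\|$, $\|\varrho\|$.

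For the main implication $(1)\Rightarrow(2)$, suppose $u\colon\lpm(S)\to\lpm(X)$ is a continuous linear extension operator. The difficulty is that $u(d)$, even when $d$ is a metric, may only be a pseudometric on $X$: it can degenerate on points of $X\setminus S$, and worse, two distinct points of $X$ could be identified. The fix is to add a small multiple of the ambient metric. Define $v\colon\lm(S)\to\lpm(X)$ by $v(d) = u(d) + \varepsilon(d)\,\varrho$ for a suitable $\varepsilon(d)>0$; but again we need linearity, so instead set $v(d) = u(d) + \|d\|_S\cdot\varrho$ — no, that is not linear either. The correct device: fix the metric $\varrho$ on $X$ and note $\varrho|S\in\lm(S)$; consider the operator $w(d) := u(d) + u(\varrho|S)$ applied after translating, or rather, work with the affine slice as above and then \emph{symmetrize}. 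Concretely, I would define $v(d) = u(d) + \lambda\, u(\varrho|S) \cdot(\text{something})$ — the cleanest route is: since $u(\varrho|S)$ is a Lipschitz pseudometric on $X$ extending $\varrho|S$, and $\varrho$ itself is a metric on $X$, the pseudometric $u(\varrho|S)+\varrho$ is actually a \emph{metric} on $X$ (it is positive off the diagonal because $\varrho$ is). Then for $d\in\lm(S)$, the pseudometric $u(d)+\|d\|_S\,\varrho$ is a metric on $X$ extending $d$, but to restore linearity one defines $v(d) := u(d) + \bigl(u(\varrho|S)+\varrho - \varrho|S \text{ on } S\bigr)$-type correction; the honest statement is that $v(d) := u(d + \varrho|S) - u(\varrho|S) + \varrho$ extends $d$ and is a metric (positive off-diagonal via the $\varrho$ summand), is continuous (norm $\le \|u\|(1+\|\varrho|S\|_S)+1+\dots$), but is affine in $d$, not linear. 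So the genuine content is to pass from an affine extension operator valued in metrics to a linear one, exactly as in the reverse direction: the map $d\mapsto v(d)$ is affine with $v(0)=\varrho$ a fixed metric, and one recovers a \emph{linear} extension operator into $\lm(X)$ by the cone-generation trick, writing each metric as a combination involving $\varrho|S$ and exploiting that $\lm(X)$ is a cone containing $\varrho$.

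Thus in both directions the mechanism is the same: a continuous linear operator between the cones of pseudometrics automatically behaves well on the sub-cones of metrics once one corrects by the fixed ambient metric, and conversely a metric-valued operator, being automatically pseudometric-valued, needs only its linearity checked — which is the hypothesis. The key steps, in order, are: (i) record that $\varrho|S\in\lm(S)$ and $\varrho\in\lm(X)$, and that adding $\varrho$ (resp.\ $\varrho|S$) turns any Lipschitz pseudometric into a Lipschitz metric; (ii) for $(1)\Rightarrow(2)$, define the candidate $v(d) = u(d) + c\,\varrho$ restricted appropriately and verify it lands in $\lm(X)$, then fix linearity via the cone-generation identity $d=(d+\varrho|S)-\varrho|S$; (iii) estimate the norm of $v$ in terms of $\|u\|$, $\|\varrho|S\|_S=1$, and $\|\varrho\|_X=1$ to get continuity; (iv) for $(2)\Rightarrow(1)$, compose the given metric operator with the same translation-by-$\varrho$ device in reverse and again repair linearity. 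I expect the main obstacle to be step (ii)/(iv): showing that the natural affine construction can be promoted to a genuinely \emph{linear} operator without losing the extension property or continuity — this is where one must be careful that the ``subtract $u(\varrho|S)$'' correction is consistent (independent of the auxiliary scalar $c$) and that positive homogeneity and additivity survive, since $\lpm$ and $\lm$ are only cones, not linear spaces, so one cannot simply invoke linear-algebraic quotient arguments.
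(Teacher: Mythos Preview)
Your sketch has a genuine gap in the direction $(1)\Rightarrow(2)$, and it is exactly the obstacle you flag at the end but do not resolve. Every correction you propose of the form $v(d)=u(d)+c\,\varrho$ (with $c$ constant, or $c=\|d\|_S$, or $c$ coming from an affine translation) fails the \emph{extension} requirement: on $S\times S$ it restricts to $d+c\,\varrho|_{S\times S}$, not to $d$. Your formula $u(d+\varrho|S)-u(\varrho|S)+\varrho$ collapses, by linearity of $u$, to $u(d)+\varrho$, so it has the same defect. The paper's proof avoids this by constructing a special Lipschitz pseudometric $\tilde\varrho$ on $X$ that vanishes precisely on $(S\times S)\cup\Delta_X$ (built as a series of Lipschitz extensions of elementary pseudometrics $\varrho_{xy}$), so that adding a multiple of $\tilde\varrho$ does not disturb values on $S$ but does separate all other pairs. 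Linearity is then obtained not by any cone-generation trick but by choosing the multiple to be the \emph{linear} functional $\delta\mapsto\delta(x_0,y_0)$ for fixed $x_0\neq y_0$ in $S$: one sets $\tilde u(\delta)=u(\delta)+\delta(x_0,y_0)\,\tilde\varrho$, which is visibly linear, extends $\delta$, lands in $\lm(X)$ whenever $\delta\in\lm(S)$, and is bounded since $\delta(x_0,y_0)\le\|\delta\|_S\,\varrho(x_0,y_0)$. Both ingredients --- the pseudometric $\tilde\varrho$ and the linear scalar $\delta(x_0,y_0)$ --- are missing from your outline.

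For $(2)\Rightarrow(1)$ you do mention the correct idea in passing: $\lm(S)$ is dense in $\lpm(S)$ (via $d\mapsto d+\varepsilon\,\varrho|S$), $\lpm(X)$ is complete, so a bounded linear $v$ on $\lm(S)$ extends uniquely to $\lpm(S)$. That is exactly the paper's argument. Your alternative ``cone-generation'' formula $u(d)=v(d+c\,\varrho|S)-c\,v(\varrho|S)$, however, does not work as stated: this is a \emph{difference} of two metrics, and there is no reason it should be nonnegative or satisfy the triangle inequality, so it need not lie in $\lpm(X)$ at all. Stick with the density argument.
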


\begin{proof} (1)$\Rightarrow$(2). Let $u\colon \lpm(S)\to \lpm(X)$ be
a continuous linear extension operator.

Note first that there exists $\tilde\varrho\in\lpm(X)$ such that
$\tilde\varrho(x,y)=0$ if and only if $(x,y)\in(S\times S)\cup
\Delta_X$ (by $\Delta_X $ we denote the diagonal of $X$). In order
to construct $\tilde\varrho$,
consider for any $x,y\in X\setminus S$ with $x\neq y$,
 the pseudometric $\varrho_{xy}$ on $S\cup\{x,y\}$, defined by
$$\varrho_{xy}|(S\times S)=0,\
\varrho_{xy}(x,y)=\varrho_{xy}(x,s)=\varrho_{xy}(y,s)=1$$ for any $s\in
S$. Denoting by $d$ the original metric on $X$ we see that
\begin{align*}
   & \varrho_{xy}(x,y)=1=(1/d(x,y))d(x,y), \\
   & \varrho_{xy}(x,s)=1\le(1/d(x,S))d(x,s), \\
   & \varrho_{xy}(y,s)=1\le(1/d(y,S))d(y,s)
\end{align*} for any $s\in S$. Therefore, $\varrho_{xy}$ is a
Lipschitz pseudometric with the Lipschitz constant
$$\max\{1/d(x,y), 1/d(x,S),
1/d(y,S)\}.$$
By a
result of Luukkainen \cite[Theorem 6.15]{L}, there exists a Lipschitz
pseudometric
$\tilde\varrho_{xy}$ on $X$ which is an extension of
$\varrho_{xy}$.

There exist (necessarily disjoint) neighborhoods $U_{xy}$ and $V_{xy}$ of $x$
and $y$ respectively,
such that $\tilde\varrho_{xy}(x',y')\neq0$, for every $x'\in
U_{xy}$ and $y'\in V_{xy}$. The family $$\{U_{xy}\times V_{xy}\mid x,y\in
((X\setminus S)\times (X\setminus S))\setminus\Delta_X\}$$ forms an open cover of
$((X\setminus S)\times (X\setminus S))\setminus\Delta_X$ and, by separability of
the latter set, there exists a sequence $(x_i,y_i)$ in $((X\setminus S)\times
(X\setminus S))\setminus\Delta_X$ such that $$\bigcup\{U_{x_iy_i}\times
V_{x_iy_i}\mid i\in\N\}=((X\setminus S)\times (X\setminus S))\setminus\Delta_X.$$
Let $$\displaystyle{\tilde\varrho=\sum_{i=1}^\infty
\frac{\tilde\varrho_{x_iy_i}}{2^i\|\tilde\varrho_{x_iy_i}\|_X}}.$$
Then, obviously, $\tilde\varrho\in\lpm(X)$ is as required.

Now define an operator $\tilde u\colon \lm(S)\to\lm(X)$ as follows. Let
$x_0,y_0\in S$, $x_0\neq y_0$. Let $\tilde
u(\delta)=u(\delta)+\delta(x_0,y_0)\tilde\varrho$, for any $\delta\in \lm(S)$. We
leave it
to the reader to easily verify that $\tilde u$ is
indeed a continuous
extension operator.

(2)$\Rightarrow$(1). We are going to show that $\lm(S)$ is dense in $\lpm(S)$.
Let $\varepsilon>0$. Given $\varrho\in\lpm(S)$, we see that
$\varrho_1=\varrho+\varepsilon d'$, where $d'$ is the original metric on $S$, is
an element of $\lm(S)$ with $\|\varrho-\varrho_1\|_S\le\varepsilon$.

Let $u\colon \lm(S)\to \lm(X)$ be a continuous linear extension operator. Since
$\lm(S)$ is dense in $\lpm(S)$ and the space $\lpm(X)$ is complete, there exists
a unique continuous extension
$\tilde u\colon
\lpm(S)\to \lpm(X)$
of $u$. Obviously, $\tilde u$ is a continuous
linear extension operator.

\end{proof}

\section{The Main result}

The following is the main result of this paper.

\begin{thm}\label{t:main}
There exists a closed subspace $A$ of a zero-dimensional compact metric space $X$
for which there is no extension operator for Lipschitz pseudometrics.
\end{thm}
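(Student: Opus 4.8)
The strategy is to transfer the known negative result of A.\ Brudnyi and Yu.\ Brudnyi on linear extension operators for Lipschitz \emph{functions} to the setting of Lipschitz \emph{pseudometrics}, using the machinery assembled in the Preliminaries. From \cite{BB} one extracts a sequence of finite metric pairs $(S_n,X_n)$ (living inside zero-dimensional, even finite, metric spaces) with $\lambda(S_n,X_n)\to\infty$; in fact the Brudnyi--Brudnyi examples can be taken with each $X_n$ finite, hence automatically zero-dimensional, and one normalizes (via Lemma~3.1, the scaling invariance $\lambda(cS,cX)=\lambda(S,X)$) so that $\mathrm{diam}\,X_n\le 1$ and the spaces are pairwise "far apart." First I would assemble these blocks into a single zero-dimensional compact metric space $X=\{*\}\sqcup\bigsqcup_n X_n$, where the $X_n$'s are placed in disjoint clopen pieces shrinking to an added point $*$ (say $X_n$ sits at distance roughly $2^{-n}$ from $*$ and has diameter $\le 4^{-n}$), and set $A=\{*\}\sqcup\bigsqcup_n S_n$. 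Since each $X_n$ is zero-dimensional and the pieces are clopen and converge to $*$, the space $X$ is a zero-dimensional compact metric space and $A$ is closed.

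The heart of the argument is the implication "pseudometric extension operator $\Rightarrow$ function extension operator," foreshadowed in the Introduction. Suppose for contradiction that $u\colon\lpm(A)\to\lpm(X)$ is a continuous linear extension operator with norm $\|u\|=M<\infty$. Fix an index $n$ and a basepoint $s_n\in S_n$. Given a Lipschitz function $f\colon S_n\to\R$ with $f(s_n)=0$, consider the pseudometric $d_f$ on $A$ supported on the $n$-th block, $d_f(x,y)=|f(x)-f(y)|$ for $x,y\in S_n$ and $d_f$ vanishing on all other pairs (it is a pseudometric on $A$ because the blocks are at positive distance, so the triangle inequality across blocks is trivially satisfied, and $\|d_f\|_A$ is comparable to $\|f\|_{S_n}$ up to a factor depending only on the geometry of the block). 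Then $u(d_f)$ is a Lipschitz pseudometric on $X$ extending $d_f$, with norm $\le M\|d_f\|_A$. Restricting to the block $X_n$ and composing with the pseudometric-to-function passage $g_f(x):=u(d_f)(x,\pi(x))$ for a suitable reference map, or more simply exploiting that a pseudometric restricted to a pair of a basepoint yields a function, I would produce from $u(d_f)$ a Lipschitz extension $\bar f$ of $f$ to $X_n$ whose seminorm is bounded by a constant times $M\|f\|_{S_n}$, with the constant independent of $n$. The construction $f\mapsto\bar f$ is linear, so it witnesses $\lambda(S_n,X_n)\le C\cdot M$ for all $n$, contradicting $\lambda(S_n,X_n)\to\infty$.

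The main obstacle—and the step requiring genuine care—is the passage from a Lipschitz pseudometric on $X$ back to a Lipschitz function on $X_n$ in a way that is \emph{linear} in the input function $f$ and has norm controlled uniformly in $n$. A pseudometric is not linear in an underlying function, so one cannot simply "read off" $f$; the trick is to work with pseudometrics of the special form $d_f(x,y)=|f(x)-f(y)|$, note that such a pseudometric determines $f$ up to an additive constant and sign, and recover $f$ via $f(x)-f(y)$-type expressions after fixing the value at the basepoint $s_n$. Concretely, for $x\in X_n$ one sets $\bar f(x)=u(d_f)(x,s_n)$ when $f\ge 0$ on $S_n$, and handles general $f$ by splitting $f=f^+-f^-$ into Lipschitz pieces (each vanishing at $s_n$ after subtracting constants) and using linearity of $u$; one must then check that the resulting map remains $O(M)$-Lipschitz on $X_n$ and restricts to $f$ on $S_n$. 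One also needs to confirm that $\|d_f\|_A\le K\|f\|_{S_n}$ with $K$ independent of $n$, which is where the normalization of the blocks (uniformly bounded diameters, uniform lower bounds on inter-block distances relative to intra-block distances) is used. Once this uniform bound is in place, the contradiction with the unboundedness of $\lambda(S_n,X_n)$ is immediate, proving the theorem; I would also remark that the same construction shows $\Lambda(X)=\infty$ for this $X$.
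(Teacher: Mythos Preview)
Your overall architecture coincides with the paper's: feed in the Brudnyi--Brudnyi finite pairs $(S_n,X_n)$ with $\lambda(S_n,X_n)\to\infty$, assemble them into a single zero-dimensional compactum, and argue that a bounded linear extension operator for Lipschitz pseudometrics would manufacture bounded linear extension operators for Lipschitz functions on the blocks. (The paper wedges the blocks at a common origin, builds a global operator $\lpf(S)\to\lpf(X)$, and then invokes the retraction Lemma~\ref{l:2}; your disjoint-union-plus-limit-point variant and your block-by-block bookkeeping are cosmetic differences.)

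The genuine gap sits exactly where you flag the ``main obstacle.'' Your assignment $f\mapsto d_f$, $d_f(x,y)=|f(x)-f(y)|$, is \emph{not} additive even on the cone of nonnegative Lipschitz functions vanishing at $s_n$: with $S_n\supset\{s_n,a,b\}$ and $f=\chi_{\{a\}}$, $g=\chi_{\{b\}}$, one has $d_f(a,b)+d_g(a,b)=2$ while $d_{f+g}(a,b)=0$. Hence $f\mapsto \bar f:=u(d_f)(\cdot,s_n)$ is not additive on that cone either (for nonnegative $f,g$ one gets $\overline{f+g}=u(d_{f+g})(\cdot,s_n)$, whereas $\bar f+\bar g=u(d_f+d_g)(\cdot,s_n)$, and these disagree off $S_n$ since $d_{f+g}\neq d_f+d_g$). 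The $f=f^+-f^-$ device only handles signs; it cannot create additivity that is already absent on the nonnegative part. So you have not produced a \emph{linear} operator $\lpf(S_n)\to\lpf(X_n)$, and no bound on $\lambda(S_n,X_n)$ follows. The paper sidesteps precisely this by abandoning $d_f$: it uses that $S$ is countable, takes the indicators $f_x$ ($x\in S\setminus\{0\}$) as a Schauder basis of $\lpf(S,0)$, sets $m(f_x)(y,z)=|f_x(y)-f_x(z)|$, and then \emph{defines} $m$ on $\lpf^+(S,0)$ by linear extension, $m(\sum_x\alpha_xf_x)=\sum_x\alpha_x\,m(f_x)$. This $m$ is additive on the cone by construction and lands in $\lpm(S)$; then $v(h)(y)=u(m(h))(y,0)$, followed by the $h=h_1-h_2$ splitting, yields a genuinely linear $v\colon\lpf(S)\to\lpf(X)$. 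A smaller slip: your $d_f$, declared to ``vanish on all other pairs,'' is not a pseudometric on $A$ (take $x,z\in S_n$, $y\notin S_n$; the triangle inequality fails). Extending $f$ by $0$ off $S_n$ repairs that, but the linearity defect above remains the decisive obstruction.
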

\begin{proof}
We first
recall some results from \cite{BB}. Let $\Z_1^n(l)$ stand for
$\Z^n\cap[-l,l]^n$ endowed with the $\ell_1$-metric.
It was proved in
\cite[Lemma 10.5]{BB} that there exists
$c_1>0$ satisfying the following condition: For any natural $n$ there exists an integer  $l(n)>0$ and a subset $Y_n\subset
\Z_1^n(l(n))$ such that
\begin{equation}\label{f:neq}
 \lambda(Y_n,\Z_1^n(l(n)))\ge
c_1\sqrt{n}.
\end{equation}
 Let 
 $$X=\left(\coprod_{n=1}^\infty
\frac{1}{nl(n)}\Z_1^n(l(n))\right)\big/\sim,$$ 
where $\sim$ is the equivalence relation
which identifies all the origins, be the bouquet of the spaces $\Z^n_1(l(n))$,
$n\in\N$. 
We naturally identify every 
$$\frac{1}{nl(n)}\Z_1^n(l(n)), \  n\in\N,$$
with its copy,
which we denote by $X_n$, in $X$. The space $X$ is endowed with the maximal metric
$\varrho$ inducing the original metric on $X_n$, for every $n\in\N$. 
Obviously,
$X$ is a compact metric space. One can easily see that $X$ is zero-dimensional.

Let $$S_n=\frac{1}{nl(n)}Y_n\subset X_n,\ n\in\N.$$ Suppose that
there exists an extension operator $u\colon\lpm(S)\to\lpm(X)$,
where $$S=\left(\coprod_{n=1}^\infty S_n\right)/\sim\subset X.$$

By $\lpf(X,0)$ (resp. $\lpf(S,0)$) we denote the set of functions from $\lpf(X)$ (resp. $\lpf(S)$) that vanish at $0\in
X$ and by $\lpf^+(X,0)$ (resp. $\lpf^+(S,0)$) we denote the set of nonnegative functions from
$\lpf(X,0)$ (resp. $\lpf(S,0)$).

For any $x\in S\setminus\{0\}$, we denote by $f_x\colon S\to\R$ the function defined as follows:
$$f_x(x)=1 \ \ \hbox{\rm and} \ \ f_x(y)=0, \ \  \hbox{\rm whenever} \ \ y\neq x.$$
Then $f_x\in \lpf^+(S,0)$ and the collection $\{f_x\mid x\in S\setminus\{0\}\}$ forms a Schauder basis for $\lpf(S,0)$.

For any $x\in S\setminus\{0\}$, let $m(f_x)\colon S\times S\to \R$ be defined as follows: 
$$m(f_x)(y,z)=|f_x(y)-f_x(z)|.$$

Then, clearly $m(f_x)\in\lpm(S)$ and the assignment
$$f_x\mapsto m(f_x),  \ x\in S\setminus\{0\},$$
extends to a linear operator $m\colon \lpf^+(S,0)\to \lpm(S)$ by the formula
$$m\left(\sum_{x\in S\setminus\{0\}}\alpha_x f_x\right)=\sum_{x\in S\setminus\{0\}}\alpha_x m(f_x).$$

If $h\in \lpf^+(S,0)$, define $v(h)(y)=u(m(h))(y,0)$, for any $y\in X$. If $h\in \lpf(S,0)$, we represent $h$ as $h=h_1-h_2$, where $h_1,h_2\in \lpf^+(S,0)$, and  define $v(h)=v(h_1)-v(h_2)$. One can easily see that then $v(h)$ is well-defined.

 If $h\in\lpf(S)$, then $h-h(0)\in \lpf(S,0)$
and we put $v(h)=v(h-h(0))+h(0)$. By
a
direct verification we show that
$v\colon\lpf(S)\to\lpf(X)$ is a linear extension operator with $\|v\|<\infty$.
Therefore
$\lambda(S,X)<\infty$.

For every $n$, denote by $r_n\colon X\to X_n$ the retraction that sends the
complement of
$X_n$ to $0\in X_n$. Then evidently, $r_n$ is a 1-Lipschitz
retraction, $r_n(S)=S_n$ and, by Lemma \ref{l:2}, $\lambda(S_n, X_n)\le
\lambda(S,X)$. This obviously contradicts
the
 inequality (\ref{f:neq}).
\end{proof}

It follows by Proposition \ref{p:1} that for the subset $S$ of the space $X$
from the proof of Theorem \ref{t:main} there is no continuous linear operator
extending Lipschitz metrics.

\section{Epilogue}

One can follow  the proof of Theorem \ref{t:main} and conjecture that
$\lambda(S,X)<\infty$, whenever $\Lambda(S,X)<\infty$, for any subset $S$ of a
metric space $X$. Actually, the following question arises:

\begin{que}\label{q:1}
Are the numbers $\Lambda(S,X)$ and $\lambda(S,X)$ always equal?
\end{que}

We conjecture that the answer  is affirmative. However, if this is not the
case, then one can ask for a pseudometric analogue
of any result concerning linear
extensions of Lipschitz functions. As an example, we formulate the following
question inspired by results from \cite{bsh1}.

\begin{que}
 Let $(X,d)$ be a metric space and $\omega\colon\mathbb R_+\to\mathbb R_+$
 be a concave non-decreasing
function with $\omega(0)=0$. The function $d_{\omega}=\omega\circ d$ is a metric
on $X$. Are the properties $\Lambda(X,d_{\omega})<\infty$ and
$\Lambda(X,d)<\infty$ equivalent?
\end{que}

It was proved in \cite{tz} that there exists a linear operator which extends
partial pseudometrics with variable domain. The following question is, in some
sense, a strengthening of Question \ref{q:1}. Given a compact metric space $X$, we
let
$$\lpm=\bigcup\{\lpm(A)\mid A\text{ is a nonempty closed subset of
} X\}.$$ One can endow $\lpm$ with the following metric, $D$:
$$D(\varrho_1,\varrho_2)=\inf\{\|\tilde\varrho_1-\tilde\varrho_2\|_X
\mid \tilde\varrho_i\text{ is a Lipschitz extension of }\varrho_i\}.$$

\begin{que} Suppose that
for a metric space $X$, $\Lambda(X)<\infty$.
 Is there a continuous linear extension operator for partial
Lipschitz pseudometrics on $X$, i.e.
a map $u\colon
\lpm\to\lpm(X)$ which is continuous with respect to the metric $D$
and whose restriction onto every $\lpm(A)$, where $A$ is a subset of $X$, is
linear?
\end{que}

Note that the question which corresponds to the one above for the case of partial
Lipschitz fuctions is also open  - see \cite{ks} for the results on simultaneous
extensions of partial continuous functions.

\begin{que}
The space $\lpm(X)$ can also be endowed
with the topologies of the uniform and
pointwise convergence. Are there linear continuous extensions operators from
$\lpm(A)$ to $\lpm(X)$, where $A$ is a subset of $X$, which are continuous in
these topologies?
\end{que}

 We denote the set of all Lipschitz ultrametrics on a subset $Y$ of a
zero-dimensional metric space
by $\lpu(Y)$. It was proved in \cite{tz1} (see also \cite{s})
that there exists a continuous extension operator that extends ultrametrics
defined on a closed subspace of a zero-dimensional compact metric space and
preserves the operation $\max$.

\begin{que}
Given a subset $A$ of a zero-dimensional metric space $X$, is there a continuous
extension operator for Lipschitz ultrapseudometrics
$u\colon
\lpu(A)\to\lpu(X)$
which
preserves the operation $\max$ (is
homogeneous)?
\end{que}

%\section*{Acknowledgements}

\end{document}